\documentclass{amsart}
\usepackage{amssymb,amsmath,amsthm}
\usepackage{hyperref}

\newtheorem{theorem}{Theorem}
\newtheorem{lemma}[theorem]{Lemma}
\newtheorem{corollary}[theorem]{Corollary}
\newtheorem{proposition}[theorem]{Proposition}

\theoremstyle{definition}
\newtheorem{definition}[theorem]{Definition}
\newtheorem{conjecture}[theorem]{Conjecture}
\newtheorem{remark}[theorem]{Remark}

\title[Slope inequalities for the spin geography problem]
{Slope inequalities for the geography problem\\ 
of spin symplectic 4-manifolds}

\author[S. Fushida-Hardy]{Shintaro Fushida-Hardy}
\address{Department of Pure Mathematics, 
University of Waterloo, 
Waterloo, ON, N2L 3G1, Canada} 
\email{sfushida@uwaterloo.ca}

\author[R. Harris]{Robert Harris}
\address{D\'epartement de Math\'ematiques, 
Universit\'e du Qu\'ebec \`a Montr\'eal, 
Montr\'eal, QC, H2X 3Y7, Canada}
\email{harris.robert@uqam.ca}

\author[B. D. Park]{B. Doug Park}
\address{Department of Pure Mathematics, 
University of Waterloo, 
Waterloo, ON, N2L 3G1, Canada} 
\email{bdpark@uwaterloo.ca}

\date{November 7, 2025}

\subjclass[2020]{57K43, 57R55, 11N05}

\keywords{Bogomolov-Miyaoka-Yau inequality, Cram\'er's conjecture, 
geography problem, symplectic $4$-manifolds}

\begin{document}

\begin{abstract}	
We construct infinitely many new pairwise nondiffeomorphic smooth structures on infinitely many closed simply connected spin $4$-manifolds with positive signature. Our construction builds on an infinite family of simply connected complex surfaces of general type due to Roulleau and Urz\'ua that populate points arbitrarily near the Bogomolov-Miyaoka-Yau line in the complex geography plane. 
We also discuss how the conjectural symplectic Bogomolov-Miyaoka-Yau inequality implies that our results are close to being optimal.  
\end{abstract}

\maketitle

\section{Introduction}

By Freedman’s seminal work \cite{freedman}, the homeomorphism type of a closed simply connected oriented smooth $4$-dimensional manifold ($4$-manifold for short) is determined by its intersection form on the second homology group.  
The ``symplectic geography problem'' asks when a symmetric bilinear form (form for short) can be realized as the intersection form of a \emph{symplectic}\/ $4$-manifold.  
The problem has been answered when the form has negative signature (cf.~\cite{ap: odd,ps}).  
In this paper, we will study the open problem of when the signature of the form is nonnegative, with a focus on realizability by \emph{spin}\/ symplectic 4-manifolds.  

Recall from Chapter~1 in \cite{gompf-stipsicz} that a closed simply connected oriented smooth $4$-manifold $M$\/ is spin if and only if its intersection form 
$$I_M:H_2(M;\mathbb{Z})\otimes H_2(M;\mathbb{Z})\to\mathbb{Z}$$ has a block matrix representative $[I_M]=sE_8 \oplus qH$\/ for some integers $s$\/ and $q$, with $s$\/ even and $q\geq 0$, where 
\begin{equation*}
E_8 = \mbox{{\small $\left[
\begin{array}{cccccccc}
2&1&0&0&0&0&0&0\\
1&2&1&0&0&0&0&0\\
0&1&2&1&0&0&0&0\\
0&0&1&2&1&0&0&0\\
0&0&0&1&2&1&0&1\\
0&0&0&0&1&2&1&0\\
0&0&0&0&0&1&2&0\\
0&0&0&0&1&0&0&2
\end{array}
\right]$}} \textnormal{ \ and \ } 
H = \left[\begin{array}{cc}
0&1 \\
1&0
\end{array} 
\right].
\end{equation*}
For such $M$, we have 
\begin{equation}\label{eq: e and sigma}
e(M)=2+8s+2q \quad \mathrm{and \ \ } \sigma(M)=8s,
\end{equation}
where $e(M)$ and $\sigma(M)$ are the Euler characteristic and the signature of $M$, respectively.  
We require $s$\/ to be even because the signature of a closed oriented smooth $4$-manifold is divisible by $16$ according to Rokhlin's theorem (cf.~\cite{rohlin}).  
If the signature of $M$\/ is nonnegative (i.e., if $s\geq 0$), then $b_2^+(M)=8s+q$, and $b_2^-(M)=q$, 
where $b_2^+(M)$ and $b_2^-(M)$ are the dimensions of the maximal positive-definite and maximal negative-definite subspaces of $H_2(M;\mathbb{Z})$ under the intersection form, respectively.  

If a closed simply connected oriented smooth $4$-manifold $M$\/ is symplectic, 
then $M$\/ possesses an almost complex structure.  
By Hirzebruch's signature theorem (cf.~\cite{hirzebruch}), we must have \[e(M)+\sigma(M)=2+2b_2^+(M)\equiv 0\pmod{4}.\]
Thus $b_2^+(M)$ must be odd, and hence $q$ must be odd when $s\geq 0$.  

Now recall that a smooth $4$-manifold $M$\/ is \emph{irreducible}\/ if 
every connected sum decomposition $M= X\#Y$ implies that either $X$\/ or $Y$\/ is homeomorphic to the standard $4$-sphere $S^4$.  
Irreducible $4$-manifolds serve as basic building blocks in the classification of closed simply connected smooth $4$-manifolds up to connected sum operations.  
To state our results more efficiently, we recall the following definition from \cite{ap: spin}.  

\begin{definition}\label{definition: infinity squared}
We say that a smooth oriented $4$-manifold $M$\/ has $\infty$-\emph{property}\/ if there exist infinitely many pairwise nondiffeomorphic irreducible smooth $4$-manifolds
that are all homeomorphic to $M$.  
We say that $M$\/ has $\infty^2$-\emph{property}\/ if there exist infinitely many pairwise nondiffeomorphic irreducible symplectic $4$-manifolds and infinitely many pairwise nondiffeomorphic irreducible nonsymplectic $4$-manifolds, all of which are homeomorphic to $M$.  
We also say that a symmetric bilinear form has $\infty$-\emph{property}\/ or $\infty^2$-\emph{property}\/ if it is the intersection form of a 
simply connected smooth $4$-manifold with $\infty$-\emph{property}\/ or $\infty^2$-property, respectively.
\end{definition}

It is immediate that $\infty^2$-property implies $\infty$-property.  
The converse is false in general.  
For example, if any simply connected $4$-manifold $M$\/ has $\infty^2$-property and $\sigma(M)$ is odd, then the (non-spin) manifold $\overline{M}$ with the reverse orientation will inherit $\infty$-property from the $\infty$-property on $M$, but $\overline{M}$\/ will not admit any symplectic structure.  
In fact, $\overline{M}$\/ will not admit any almost complex structure since
$b_2^+(\overline{M})=b_2^-(M)=b_2^+(M)-\sigma(M)$ will be even. 

There is much less known regarding the converse in the spin case (and more generally, the even signature case). For example, if $E(2)$ denotes a complex K$3$ surface, then $E(2)$ has $\infty^2$-property (see e.g.~\cite{fs}). Then similar to above, $\overline{E(2)}$ (with $\sigma(\overline{E(2)})=-\sigma(E(2))=16$) will also have $\infty$-property, but it is unknown at the moment whether $\overline{E(2)}$ supports any symplectic structure.  
Hence we do not know at the moment whether the positive-signature spin $4$-manifold $\overline{E(2)}$ has $\infty^2$-property.  

Given a nonnegative even integer $s$, 
it was shown in \cite{jpark: spin} that $sE_8 \oplus qH$\/ has $\infty^2$-property when the odd integer $q$\/ is larger than some constant that depends only on $s$.  
In other words, for a fixed nonnegative value of the signature, our form will have $\infty^2$-property if its rank is high enough.  
Accordingly, the following notation was introduced in \cite{ap: spin}.  

\begin{definition}\label{definition: Lambda}
(cf.~Definition~3 in \cite{ap: spin})
For an even integer $s\geq 0$, 
let $\Lambda_s$ denote the smallest positive odd integer such that the
symmetric bilinear form
$sE_8 \oplus qH$\/ has $\infty^2$-property for every odd integer $q\geq \Lambda_s$.  
\end{definition}

Roughly speaking, $\Lambda_s$ is the ``smallest'' number of $H$\/ summands that 
a simply connected spin $4$-manifold $M$\/ with signature $\sigma(M)=8s\geq 0$ must have in its intersection form $I_M$ in order for $M$\/ to have $\infty^2$-property.
As $s\to\infty$, an asymptotic upper bound 
\begin{equation}\label{eq: apu bound}
\Lambda_s \leq 8s + O(s^{6/7}) 
\end{equation}
was proved in \cite{apu}.  
The main goal of this paper is to provide a smaller (and hence better) asymptotic upper bound on $\Lambda_s$.  

More precisely, we will prove (Section~\ref{section: asymptotic inequalities}, Corollary~\ref{corollary: Lambda}) that as $s\to\infty$, 
\begin{equation}
\Lambda_s \leq 8s
+O\big(s^{4/5}\,d(s)\big),
\end{equation}
where $d(s)$ (defined in Section~\ref{section: asymptotic inequalities}, Equation~(\ref{eq: d(s)})) measures prime gaps. For example, with the assumption of the famous Cram\'er's conjecture on the size of prime gaps, we have (Section~\ref{section: asymptotic inequalities}, Proposition~\ref{proposition: new upper bound})
\begin{equation}\label{eq: new upper bound}
 \Lambda_s \leq 8s + O\big(s^{4/5}(\log s)^2\big),  
\end{equation}
as $s\to\infty$.

We note that $6/7= 0.\overline{857142}$, so (\ref{eq: new upper bound}) is strictly stronger than (\ref{eq: apu bound}), provided that Cram\'er's conjecture holds.  We refer to Remark~\ref{remark: alternative bound} for a way to obtain an alternative upper bound without resorting to Cram\'er's conjecture.

Along the way, we will also construct infinitely many new simply connected spin $4$-manifolds having $\infty^2$-property (Section~\ref{section: symplectic 4-manifolds}, Lemma~\ref{lemma: W_n}).  
The new basic building blocks that we use are the complex surfaces recently constructed by Roulleau and Urz\'ua in \cite{ru}.  
These surfaces and certain curves in them are reviewed in Sections~\ref{section: complex surfaces} and \ref{section: Sigma_n in X_n}.  
Finally, we will derive a conditional lower bound on $\Lambda_s$:
\[
\Lambda_s\geq 8s -1
\]
in the last section (Proposition~\ref{proposition: lower bound}), assuming that a ``symplectic'' version of the famous Bogomolov-Miyaoka-Yau inequality holds.  
Consequentially, the dominant term $8s$\/ in the upper bound (\ref{eq: new upper bound}) cannot be improved any further when the symplectic BMY inequality holds.

\subsection*{Acknowledgments}
The third author was partially supported by an NSERC Discovery grant 
RGPIN--2024--04401.
The authors thank the organizers of the Topology Session of 
the 2025 Canadian Mathematical Society Winter Meeting for the opportunity to present this work.

\section{Complex surfaces of Roulleau and Urz\'ua}
\label{section: complex surfaces}

In Section~5 of \cite{ru}, Roulleau and Urz\'ua had constructed an interesting infinite family of complex surfaces $\{X_n\}$.  
The topological invariants of $X_n$ were not computed explicitly in \cite{ru}, and so we will actually compute some of them in this section.  
We will use the same set of notations as in \cite{ru} to very succinctly describe these surfaces.  
As such, all page numbers in this section will refer to \cite{ru}.  
Let $p\geq 5$ be a prime number.  
We choose the parameters $\alpha=1$, $\beta=0$, and $d=1$ as in Example~5.7 of \cite{ru}.  
From these choices, we get $n=12\alpha p=12p$ according to a formula on p.~294.  
By Propositions~5.4 and 5.5 of \cite{ru}, $X_n=X_{12p}$ are simply connected and spin for all prime numbers $p\geq 5$.  

From formulas on p.~298, we get 
\begin{align*}
t_2 &= 13824 \beta^2\alpha^2 p^4+1152 \beta^4 p^4 +4608 d\alpha^2 p^2 
+768 d\beta^2 p^2 +32 d^2 -100d \\\
&= 4608 p^2-68,\\ 
\bar{c}_1^2 &= n^4+2t_2-40d-48\\
&= 20736 p^4 + 9216 p^2 -224, \\ 
\bar{c}_2 &= \frac{n^4}{3}+t_2-16d-12\\
&=6912 p^4 + 4608 p^2-96.  
\end{align*}
From formulas on p.~299, we also get 
\begin{align*}
t_{2,1} &= 384\beta^4 p^4 +4608\alpha^2\beta^2 p^4 +2304d\alpha^2p^2 
-52d+384d\beta^2p^2+16d^2\\
&= 2304 p^2-36, \\ 
t_{2,2} &= 768\beta^4 p^4 + 9216\alpha^2\beta^2p^4 +2304d\alpha^2p^2
-48d+384d\beta^2p^2+16d^2\\
&= 2304 p^2 -32,
\end{align*}
and 
\begin{align*}
\sum_{i<j}c(q_{i,j},4p)A_i\cdot A_j &= 
\frac{4p-1}{2p}\,t_{2,1}+\frac{2p^2+1}{2p}\,t_{2,2}\\[-7pt]
&=2304 p^3 +4608 p^2 -32 p -72 + \frac{2}{p},\\
\sum_{i<j}l(q_{i,j},4p)A_i\cdot A_j &= 
(4p-1)t_{2,1}+3t_{2,2}\\[-7pt]
&=9216 p^3 + 4608 p^2 -144 p -60.
\end{align*}
From formulas on p.~298, the Chern numbers of our surfaces $X_n=X_{12p}$ are:  
\begin{align*}
c_1^2(X_n) &= 4p \bar{c}_1^2-2\bigg(t_2+2\sum_j(g(A_j)-1)\bigg)
+\frac{1}{4p}\sum_j A_j^2
-\sum_{i<j}c(q_{i,j},4p)A_i\cdot A_j\\
&=4p(20736 p^4 + 9216 p^2 -224)-2(4608 p^2-68) \\[3pt]
&\hspace{4mm} -\bigg(2304 p^3 +4608 p^2 -32 p -72 + \frac{2}{p}\bigg)
-4\sum_{j}(g(A_j)-1)+\frac{1}{4p}\sum_j A_j^2\\
&= 82944p^5+34560p^3-13824p^2-864p+208-\frac{2}{p}\\
&\hspace{4mm} -4\sum_{j}(g(A_j)-1)+\frac{1}{4p}\sum_j A_j^2,
\end{align*}
and 
\begin{align*}
e(X_n) &= c_2(X_n)= 
4p\,\bar{c}_2-\bigg(t_2+2\sum_j(g(A_j)-1)\bigg)
+\sum_{i<j}l(q_{i,j},4p)A_i\cdot A_j 
\\
&=4p(6912 p^4 + 4608 p^2-96)
-(4608 p^2-68)-2\sum_{j}(g(A_j)-1)\\
&\hspace{4mm} +9216 p^3 + 4608 p^2 -144 p -60\\[2pt]
&= 27648 p^5 + 27648 p^3 -528 p + 8-2\sum_{j}(g(A_j)-1).
\end{align*}

By a formula on p.~295, the branch divisor 
$\sum_j A_j$ is equal to 
\begin{equation*}
\sum_{i=0,1,\zeta,\infty} \hspace{-2mm}\mathcal{E}_i 
\hspace{2mm}+\sum_{i=0,1,\zeta,\infty} \hspace{-2mm}\mathcal{E}'_i
\hspace{2mm}+\sum_{i=0,1,\zeta,\infty} \hspace{-2mm}N_i
\hspace{2mm}+\hspace{2mm}\sum_{i=1}^{8d} L_i.
\end{equation*}
According to a formula on p.~294, the number of general fibers of $\pi'_i$ in the $\mathcal{E}'_i$ divisor is now $8\beta^2 p^2=0$, and thus we can conclude that each $\mathcal{E}'_i$ divisor is equal to zero.  
It follows that 
\begin{equation}\label{eq: branch divisor}
\sum_j A_j \hspace{2mm} =
\sum_{i=0,1,\zeta,\infty} \hspace{-2mm}\mathcal{E}_i 
\hspace{2mm}+\sum_{i=0,1,\zeta,\infty} \hspace{-2mm}N_i
\hspace{2mm}+\hspace{2mm}\sum_{i=1}^{8} L_i.
\end{equation}
Recall from p.~294 that each component of the divisor $\mathcal{E}_i$ has genus $1$, each component of the divisor $N_i$ has genus $0$, and each divisor $L_i$ has genus $0$.  
Since $N=\sum_{i=0,1,\zeta,\infty} N_i=\sum_{i=0,1,\zeta,\infty}
(N_{i,1}+N_{i,2}+N_{i,3})$ has $12$ components, we have 
\begin{equation*}
\sum_{j}(g(A_j)-1) = 12(-1)+8(-1)=-20.  
\end{equation*}
Therefore, we conclude that 
\begin{equation}\label{eq: e(X_n)}
e(X_n) = 27648 p^5 + 27648 p^3 -528 p + 48.
\end{equation}

Next recall from p.~292 that the divisor $\mathcal{H}'_n=\sum_{i=0,1,\zeta,\infty} \mathcal{E}_i$ consists of strict transforms of $4(n\bar{n}-3)/3=192p^2-4$ tori.  
We note that $\mathcal{H}'_n$ had $$4(n\bar{n}-3)=576 p^2-12$$ triple points (i.e., points where three tori met) and 
$$(n\bar{n}-3)(n\bar{n}-9)/3=6912p^4 - 576p^2 +9$$ quadruple points (i.e., points where four tori met) which were blown up.  
When we blow up a triple point, the resulting exceptional divisor will add $-1$ to exactly three $A_j^2$ terms corresponding to the three tori that meet at the triple point.  
Similarly, we will add four $-1$'s to the sum $\sum_{j}A_j^2$ when we blow up a quadruple point.
As observed on p.~291, each of these tori had self-intersection $-3$ before these blow-ups.  
Now each $N_{i,j}$ has self-intersection $-1$ and each $L_i$ has self-intersection $+1$.  
Combining all these facts, we compute that 
\begin{align*}
\sum_j A_j^2 &= -3(192p^2-4)-3(576 p^2-12)-4(6912p^4 -576p^2 +9)
+12(-1)+8\cdot 1\\[-8pt]
&= -27648p^4 +8.
\end{align*}
Therefore, we conclude that 
\begin{align}
c_1^2(X_n) &= 82944p^5+34560p^3-13824p^2-864p+208-\frac{2}{p}\nonumber\\[-3pt]
&\hspace{4mm} -4(-20)+\frac{1}{4p}(-27648p^4 +8)
\label{eq: c_1^2(X_n)}\\[2pt]
&=82944p^5+27648p^3-13824p^2-864p+288.\nonumber
\end{align}
It follows from (\ref{eq: e(X_n)}) and (\ref{eq: c_1^2(X_n)}) that 
\begin{equation}\label{eq: sigma(X_n)}
\sigma(X_n)=\frac{1}{3}(c_1^2(X_n)-2e(X_n))
=9216p^5 -9216p^3 -4608p^2 +64p +64.
\end{equation}

\begin{remark}\label{remark: limit of c_1^2/c_2}
Note that $\sigma(X_n)$ is divisible by $16$ as $X_n$ is spin.  
When $p=5$, formulas (\ref{eq: e(X_n)}) and (\ref{eq: c_1^2(X_n)}) 
give $89853408$ and $262306368$, 
respectively, which agree with the computations provided in Example 5.7 of \cite{ru}.  
Finally, we note that 
\[
\lim_{p\to \infty} \frac{c_1^2(X_n)}{e(X_n)}
=\frac{82944}{27648}=3.
\]
\end{remark}

\section{Complex curve in $X_n$}
\label{section: Sigma_n in X_n}

In this section, we exhibit a smooth complex curve $\Sigma_n$ lying in $X_n$ such that its self-intersection is an even positive integer.
As in the previous section, we shall use the same notations as in \cite{ru}, and all page numbers refer to \cite{ru} as well.
We recall that there is a sequence of holomorphic maps:  
\[
X_n \stackrel{f}{\longrightarrow} Y_n 
\stackrel{\sigma_n}{\longrightarrow} Z_n 
\stackrel{\varphi_n}{\longrightarrow} H
\stackrel{\tau}{\longrightarrow}\mathbb{CP}^2.
\]
Here, $f=f_1\circ f_2\circ f_3$ is the composition of a $4p$-fold cyclic branched covering map $f_1$, 
its normalization $f_2$, and finally the corresponding minimal resolution $f_3$ (see p.~295).  
The other maps are all sequences of blow-ups.  More specifically, 
$\tau$ consists of blow-ups at $12$ points (see p.~291), 
$\varphi_n$ consists of blow-ups at $(n\bar{n}-3)(n\bar{n}-9)/3$ points (see p.~293), and 
$\sigma_n$ consists of blow-ups at $4(n^2-3)$ points (see p.~295).

Now let us choose a generic line $L\subset \mathbb{CP}^2$ so that it avoids all the blow-up points 
of $\tau$, $\varphi_n$ and $\sigma_n$.  
Let us continue to denote its proper transform in $Y_n$ by $L$.  
The self-intersection of $L$\/ in $Y_n$ is still $[L]^2=1$ 
since $L$\/ is disjoint from all the exceptional divisors of the blow-ups.  

Next let $\widetilde{L}=f_1^{-1}(L)$ denote the preimage of $L$\/ under the branched covering map.  
Since the degree of the covering map is $4p$, its self-intersection is $[\widetilde{L}]^2=4p[L]^2=4p$.  
(After a sequence of $4p$\/ blow-downs, a generic fiber of the fibration map $h$\/ in the proof of Proposition~5.4 in \cite{ru} 
will turn into $\widetilde{L}$.)  
If $\cup_j A_j$ denotes the branch locus of the branched covering map $f_1$
as in (\ref{eq: branch divisor}), 
we will write $\widetilde{L}_0=\widetilde{L}\setminus(\widetilde{L}\cap(\cup_j A_j))$.  
The restriction of $f_1$ to $\widetilde{L}_0$ is an unbranched cyclic covering map, and hence $\widetilde{L}$ is connected if the induced homomorphism into the deck transformation group 
\[
\rho:\pi_1(\widetilde{L}_0)
\longrightarrow \mathbb{Z}/(4p\mathbb{Z})
\]
is surjective.  

Now recall that the branch multiplicity $\nu_j$ of $A_j$ is either $3$ or $3(2p-1)$ (see p.~295), which are all coprime to $4p$.
By the local description of cyclic branched covers 
(cf.~the proof of Claim 3.13 in \cite{ev}), 
 a local model at a generic point of $A_j$ is given by 
\[ 
\{ (x,t)\in\mathbb{C}^2 \mid
t^{4p}=ux^{\nu_j}\},
\]
where $u$\/ is some unit constant.  
Since $\gcd(4p,\nu_j)=1$, the above polynomial is irreducible by the Newton-polygon criterion, and hence 
(the preimage of) each $A_j$ is irreducible in the cyclic branched cover,
and each $A_j$ is totally ramified of index $4p$.

Consider the branch component $A_j=L_1$, which is also a preimage of a line in $\mathbb{CP}^2$ (see p.~294).  
Since $\widetilde{L}\cdot L_1 = L\cdot L_1=1$, a meridian circle $\mu_1$ for $L_1$ will represent an element of  $\pi_1(\widetilde{L}_0)$.  
Since $L_1$ is totally ramified, the image $\rho(\mu_1)$ will generate all of 
$\mathbb{Z}/(4p\mathbb{Z})$.  
Thus we can conclude that $\widetilde{L}$ is connected.  

To compute the genus $g(\widetilde{L})$, we will compute the Euler characteristic $e(\widetilde{L})$ using the inclusion–exclusion principle or the Riemann-Hurwitz formula.  
(A similar computation is carried out in the proof of Corollary~6.4 on p.~303 of \cite{ru}.)
It follows from (\ref{eq: branch divisor}) that 
\begin{equation*}
\widetilde{L}\cdot \Big(\sum_j A_j\Big) \hspace{2mm}  = 
 \sum_{i=0,1,\zeta,\infty} \hspace{-2mm} \widetilde{L}\cdot\mathcal{E}_i 
\hspace{2mm}+\sum_{i=0,1,\zeta,\infty} \hspace{-2mm} \widetilde{L}\cdot N_i
\hspace{2mm}+\hspace{2mm}\sum_{i=1}^{8} \widetilde{L}\cdot L_i .
\end{equation*}
As before, we have $\widetilde{L}\cdot L_i=1$ for each $i$.  
Since a generic line from $\mathbb{CP}^2$ will miss all the exceptional spheres of the blow-ups in $\tau$, $\widetilde{L}\cdot N_i=0$ for each $i$.  

For each generic torus fiber $F_i$ in the divisor $\mathcal{E}_i$ of the branch locus,  
its image $(\tau\circ\varphi_n\circ \sigma_n\circ f_1)(F_i)$ in $\mathbb{CP}^2$ is a smooth torus as $\tau\circ\varphi_n\circ \sigma_n$ is just a sequence of blow-down maps.  
Since a smooth torus in $\mathbb{CP}^2$ is homologous to $3L$, we conclude that  $\widetilde{L}\cdot F_i=L\cdot 3L = 3$.  
It follows that 
\[
\widetilde{L}\cdot \Big(\sum_j A_j\Big) = 3(192p^2-4) + 12\cdot 0 + 8\cdot 1
= 576p^2-4.
\]
Therefore, we have 
\begin{align*}
e(\widetilde{L})&=4p(2-(576p^2-4))+576p^2-4
=4p\cdot 2- (4p-1)(576p^2-4)\\
&=-2304p^3+576p^2+24p-4
\end{align*}
so that 
$$g(\widetilde{L})=\frac{1}{2}(2-e(\widetilde{L}))=
1152p^3-288p^2-12p+3.
$$

Finally, we define $\Sigma_n$ to be the preimage $(f_2 \circ f_3)^{-1}(\widetilde{L})$ in $X_n$.  
Recall that the composition of the normalization map $f_2$ and the minimal resolution map $f_3$ amounts to contracting Hirzebruch–Jung configurations of spheres (see p.~296).  
Thus the composition $f_2 \circ f_3$ restricts to the identity map on a Zariski open subset of $X_n$ and does not 
change the genus nor the self-intersection number of a generic curve.  
Therefore, we can conclude that: 
\begin{align}
[\Sigma_n]^2&=[\widetilde{L}]^2=4p, \label{eq: [Sigma_n]^2}\\ g(\Sigma_n)&=g(\widetilde{L})=1152p^3-288p^2-12p+3. \label{eq: g(Sigma_n)} 
\end{align}

\section{Infinite family of symplectic $4$-manifolds}
\label{section: symplectic 4-manifolds}

The complex surfaces $X_n$ in Section~\ref{section: complex surfaces} may not contain any homologically nontrivial smooth tori with self-intersection $0$, which are crucial ingredients in applying existing techniques from \cite{ap: spin, apu}.  
We bypass this difficulty by performing symplectic normal sum (cf.~\cite{gompf, mccarthy-wolfson}) of $X_n$ with homotopy elliptic surfaces constructed by Fintushel and Stern in \cite{fs}.  

Given a positive integer $r$, let $E(r)$ be a simply connected complex elliptic surface with $e(E(r))=12r$ and $\sigma(E(r))=-8r$. 
We can assume that the elliptic fibration structure on $E(r)$ has a singular cusp fiber.  
Let $K$\/ be a fibered knot in the $3$-sphere $S^3$ with genus $g(K)$.  
Let $E(r)_K$ denote the result of performing a knot surgery along a smooth torus fiber of $E(r)$ in the sense of \cite{fs}.  
It was shown in \cite{fs} that $E(r)_K$ is a simply connected symplectic $4$-manifold that is homeomorphic to $E(r)$.  
If $r$\/ is even, then $E(r)$ is spin, and thus $E(r)_K$ is also spin since they all have isomorphic intersection forms by Freedman's work \cite{freedman}.   
By gluing together a punctured sphere section of the elliptic fibration and a Seifert surface of the knot $K$, we obtain a genus $g(K)$ symplectic submanifold $\Sigma_K$ inside $E(r)_K$ with self-intersection $[\Sigma_K]^2=-r$.  

Now we perform symplectic normal sum (cf.~\cite{gompf, mccarthy-wolfson}) of the symplectic pairs $(X_n,\Sigma_n)$ and $(E(r)_K,\Sigma_K)$, 
where $g(\Sigma_n)=g(K)$ and $[\Sigma_n]^2=r\equiv 0 \pmod{2}$. 
We define 
\[
W_n:=\big(X_n\setminus\nu(\Sigma_n)\big)
\cup_{\varphi}\big(E(r)_K\setminus\nu(\Sigma_K)\big),
\]
where $\nu(\Sigma_n)$ and $\nu(\Sigma_K)$ are tubular neighborhoods of $\Sigma_n$ and $\Sigma_K$, respectively, and 
$\varphi:\partial (\nu(\Sigma_n))\to \partial (\nu(\Sigma_K))$ is an orientation-reversing gluing map on the boundaries.  
Using Equations (\ref{eq: e(X_n)}) and (\ref{eq: sigma(X_n)}) from 
Section~\ref{section: complex surfaces} with Equations (\ref{eq: [Sigma_n]^2}) and (\ref{eq: g(Sigma_n)}) from 
Section~\ref{section: Sigma_n in X_n}, we compute that:   
\begin{align}
e(W_n)&=e(X_n)+e(E(r)_K) - 2e(\Sigma_n) = e(X_n)+12r+4g(\Sigma_n)-4 \label{eq: e(W_n)}\\
&=27648 p^5 + 32256 p^3 -1152p^2 -528 p + 56 ,\nonumber\\
\sigma(W_n)&=\sigma(X_n)+\sigma(E(r)_K) =\sigma(X_n)-8r \label{eq: sigma(W_n)}\\
&=9216p^5 -9216p^3 -4608p^2 +32p +64,\nonumber\\
c_1^2(W_n) &= 2e(W_n) + 3\sigma(W_n) \label{eq: c_1^2(W_n)}\\
&= 82944p^5 + 36864p^3 - 16128p^2 - 960p + 304,\nonumber
\end{align}
where $n=12p$ and $p\geq 5$ is a prime number.  
We observe from (\ref{eq: e(W_n)}) and (\ref{eq: c_1^2(W_n)}) that
\begin{equation}\label{eq: limit of c_1^2/c_2}
\lim_{p\to\infty}\frac{c_1^2(W_n)}{e(W_n)}=\frac{82944}{27648}=3, 
\end{equation}
and for all prime numbers $p\geq 5$, 
\begin{equation}\label{eq: below BMY line}
c_1^2(W_n)-3e(W_n)= -59904p^3 - 12672p^2 + 624p +136<0.
\end{equation}

\begin{lemma}\label{lemma: W_n}
The symplectic\/ $4$-manifold $W_n$ is simply connected, spin, irreducible, and has\/ $\infty^{2}$-property\/ 
$($cf.\ Definition\/ $\ref{definition: infinity squared})$.  
Moreover, $W_n$ contains\/ $r$ disjoint symplectic torus submanifolds\/ $T_j$\/ $(j=1,\dots,r)$ of self-intersection\/ $0$ 
that represent\/ $r$ linearly independent homology classes 
and satisfy\/ $\pi_1(W_n\setminus \nu (\bigcup_{j=1}^{r}T_j))=1$.  
\end{lemma}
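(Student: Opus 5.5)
We prove each property of $W_n$ in turn, using standard facts about symplectic normal sums and knot surgery.

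\emph{Simple connectivity.} We apply Seifert--Van Kampen to the decomposition defining $W_n$. In $E(r)_K$ the surface $\Sigma_K$ contains a punctured sphere section of the elliptic fibration. A meridian of $\Sigma_K$ is therefore nullhomotopic in the complement: it bounds a punctured nucleus/cusp-fiber disk, since the torus fiber meets $\Sigma_K$ once and the cusp fiber is a sphere away from the cusp. The knot-surgery argument of \cite{fs} shows that $\pi_1(E(r)_K\setminus\nu(\Sigma_K))=1$, because the fiber-direction generators are killed by the vanishing cycles of the cusp neighborhood. Hence $\pi_1(W_n)$ is a quotient of $\pi_1(X_n\setminus\nu(\Sigma_n))$ in which the image of $\pi_1(\partial\nu(\Sigma_n))$ is killed. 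Since the meridian of $\Sigma_n$ is killed, $\pi_1(W_n)$ is a quotient of $\pi_1(X_n)=1$ (Proposition~5.4 of \cite{ru}). Alternatively, the surjectivity $\pi_1(X_n\setminus\nu(\Sigma_n))\to\pi_1(X_n)$ plus normal generation by the meridian gives the same conclusion.

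\emph{Spin.} $W_n$ is simply connected, so it suffices to show its intersection form is even. Both summands are spin, and $\Sigma_n$, $\Sigma_K$ have even self-intersection $r=4p$. Thus the spin structures on the complements restrict to the boundary circle bundles and can be matched by an appropriate choice of the gluing $\varphi$; this is the standard criterion of Gompf \cite{gompf}. Equivalently, characteristic classes can be checked via Mayer--Vietoris. Rokhlin consistency is confirmed by $\sigma(W_n)\equiv 0\pmod{16}$ in (\ref{eq: sigma(W_n)}).

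\emph{Irreducibility and the tori $T_j$.} $W_n$ is a minimal symplectic manifold: $X_n$ is minimal of general type, and $E(r)_K\setminus\nu(\Sigma_K)$ contains no exceptional spheres by Usher's theorem on minimality of normal sums. Since $\pi_1=1$, Hamilton--Kotschick's result gives that $W_n$ is irreducible. For the tori, I would take $r$ disjoint parallel copies of $T\times\{\text{pt}\}$ inside $E(r)_K\setminus\nu(\Sigma_K)$, where the torus comes from the knot-surgered fiber region, or Lagrangian tori in the Milnor fiber perturbed to be symplectic. Concretely, I would use the $r$ tori coming from the $E(r)$ Gompf nuclei, one per copy of $E(1)$ fiber sum, each lying in a nucleus disjoint from $\Sigma_K$ apart from the section. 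This step is where I expect the main obstacle. Simultaneously achieving disjointness from $\Sigma_K$, linear independence, and $\pi_1(W_n\setminus\bigcup T_j)=1$ requires that each $T_j$ have a dual sphere or a meridian that is nullhomotopic in the complement. I would try rim tori and Lagrangian tori in the $E(r)$ complement, each having a dual $(-2)$-sphere disjoint from the others; the meridians then bound punctured dual spheres, which kills them, and the previous simple connectivity argument repeats. Linear independence follows from the intersection pairing with these dual classes.

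\emph{$\infty^2$-property.} With the tori available, we perform knot surgery on $T_1$ with knots $K'$ of distinct Alexander polynomials. Fibered $K'$ yield infinitely many symplectic manifolds, distinguished by Seiberg--Witten invariants via the Fintushel--Stern formula; these are irreducible by the same argument. Nonfibered $K'$ with nonmonic Alexander polynomial yield infinitely many nonsymplectic irreducible manifolds (using Taubes and the SW basic class structure). All of these are homeomorphic to $W_n$ by Freedman, since simple connectivity of the complement is preserved and the intersection form is unchanged.
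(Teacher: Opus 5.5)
Your arguments for simple connectivity, spin and irreducibility agree with the paper in substance. The paper simply cites Lemma~3.1 of \cite{apu} for $\pi_1(E(r)_K\setminus\nu(\Sigma_K))=1$, Gompf's Proposition~1.2, and Lemma~2 of \cite{ap: spin}. One small slip: the absence of exceptional spheres in $E(r)_K\setminus\nu(\Sigma_K)$ comes from minimality of $E(r)_K$, and Usher's theorem is what then gives minimality of $W_n$.

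The genuine gap is the step you flagged: you never construct the tori $T_j$, and the concrete candidate you settle on cannot work. Every fiber of the elliptic fibration on $E(r)$ meets the section, including the fibers of the individual $E(1)$ summands. The punctured section is part of $\Sigma_K$. So these tori do not survive as closed tori in $E(r)_K\setminus\nu(\Sigma_K)\subset W_n$. The section sphere that would kill their meridians is also cut away with $\nu(\Sigma_K)$. In any case they all represent the single fiber class, so they cannot be linearly independent. ``Disjoint from $\Sigma_K$ apart from the section'' is exactly where this fails. Parallel copies of one torus are likewise never independent. Rim tori come with no dual spheres and need not lie in cusp neighborhoods.

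The missing input is the Gompf--Mrowka fact \cite{gompf-mrowka} that the K3 surface $E(2)$ contains three pairwise disjoint Gompf nuclei.
\begin{itemize}
\item Since $r=4p$ is even, write $E(r)$ as the fiber sum of $r/2$ copies of $E(2)$, glued along torus fibers lying in one nucleus of each copy.
\item The section, the knot-surgered fiber, and hence $\Sigma_K$ all lie in the region coming from these used nuclei. The remaining two nuclei in each copy therefore give $r$ disjoint nuclei $N_j\subset E(r)_K\setminus\nu(\Sigma_K)\subset W_n$.
\item Let $T_j$ be a torus fiber of $N_j$, made symplectic by perturbing the symplectic form on that $E(2)$ piece.
\item The $(-2)$-sphere $S_j\subset N_j$ meets $T_j$ once and misses every $T_i$ with $i\neq j$. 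So the meridian of $T_j$ bounds a punctured copy of $S_j$ in the complement of all the tori, giving $\pi_1(W_n\setminus\nu(\bigcup_j T_j))=\pi_1(W_n)=1$.
\item The relations $T_i\cdot S_j=\delta_{ij}$ give linear independence.
\end{itemize}
Your $\infty^2$ step does knot surgery on $T_1$, so it inherits this gap. Once $T_1$ is a fiber of the nucleus $N_1$ (a cusp neighborhood with simply connected complement), the Fintushel--Stern formula applies, and that part of your argument goes through as you describe.
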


\begin{proof}
Our proof is similar to the proof of Theorem~3.2 in \cite{apu}.  
First we will prove that $W_n$ is simply connected.  
By Lemma~3.1 in \cite{apu}, we have $\pi_1(E(r)_K\setminus\nu(\Sigma_K))=1$.
Seifert-Van Kampen theorem implies that 
\begin{equation*}
\pi_{1}(W_n) \cong \frac{\pi_1(X_n\setminus \nu (\Sigma_n))}{\big\langle\imath_{\ast}\big(\pi_1(\partial(\nu(\Sigma_n)))\big)\big\rangle},
\end{equation*}
where $\langle\imath_{\ast}(\pi_1(\partial(\nu(F_n))))\rangle$ is the normal subgroup of $\pi_1(X_n\setminus \nu (F_n))$ generated by the image of $\pi_1(\partial(\nu(F_n)))$ under the homomorphism induced by the inclusion map $\imath: \partial(\nu(\Sigma_n))\hookrightarrow X_n$.

Note that $\partial(\nu(\Sigma_n))$ is a circle bundle over $\Sigma_n$ with Euler number $r$.  
It is well known (cf.~Proposition~10.4 in \cite{fm}) that we have a presentation
\begin{equation*}
\pi_1(\partial(\nu(\Sigma_n))) = \big\langle \alpha_i, \beta_i, \mu \;\big|\; 
{\textstyle \prod_{i=1}^{g(\Sigma_n)}}[\alpha_i,\beta_i]=\mu^{r},\, 
\alpha_i \mu \alpha_i^{-1} = \mu,\, 
\beta_i\mu\beta_i^{-1} = \mu \big\rangle,
\end{equation*}
where the index $i$\/ ranges over $1,\dots,g(\Sigma_n)$.  
Here, $\mu$\/ is represented by a fiber circle which is a meridian of $\Sigma_n$, 
and $\alpha_i,\beta_i$ are the parallel push-offs of the standard generators of $\pi_1(\Sigma_n)$.  
It follows that there is a surjective homomorphism
\begin{equation}\label{eq: surjection onto pi_1(W_n)}
\frac{\pi_1(X_n\setminus \nu (\Sigma_n))}{\langle\imath_{\ast}(\mu)\rangle} \longrightarrow \pi_1(W_n).
\end{equation} 
Since $\pi_{1}(X_{n})=1$, we know that $\pi_1(X_n\setminus \nu (\Sigma_n))$ is normally generated by the meridians of $\Sigma_n$.  
But every meridian of $\Sigma_n$ is conjugate to $\imath_{\ast}(\mu)$ in $\pi_1(X_n\setminus \nu (\Sigma_n))$.  
Hence we must have $\pi_{1}(X_{n}\setminus\nu(\Sigma_n))/\langle\imath_{\ast}(\mu)\rangle=1$, and (\ref{eq: surjection onto pi_1(W_n)}) implies that $\pi_{1}(W_n)=1$ as well.  

Since $X_{n}$ and $E(r)_{K}$ are both spin, $W_n$ can also be given a spin structure 
according to Proposition~1.2 in \cite{gompf}.  
The irreducibility of $W_n$ follows immediately from Lemma~2 in \cite{ap: spin}.  

Next we recall from \cite{gompf-mrowka} that the K$3$ surface $E(2)$ contains three disjoint copies of Gompf nuclei.  
Since $E(r)$ can be viewed as the symplectic normal sum of $r/2$ copies of $E(2)$ glued along smooth torus fibers in Gompf nuclei, the homotopy elliptic surface $E(r)_{K}$ contains $2(r/2)=r$\/ Gompf nuclei of $E(2)$ that are all disjoint from $\nu(\Sigma_{K})$.  
Let $N_j$ ($j=1,\dots,r$) denote these Gompf nuclei that are contained in $(E(r)_{K} \setminus \nu(\Sigma_{K}))\subset W_n$, 
and let $T_j$ be a smooth torus fiber in $N_j$.  
By perturbing the symplectic form on the corresponding $E(2)$ part if necessary, 
we can always arrange every $T_j$ to be a symplectic submanifold of $W_n$.  
Since $T_j$ transversely intersects once a sphere section of $N_j$ with self-intersection $-2$, 
every meridian of $T_j$ is nullhomotopic.  
It follows that $\pi_1(W_n\setminus \nu (\bigcup_{j=1}^{r}T_j))=\pi_1(W_n)=1$.

Finally, to prove that $W_n$ has $\infty^2$-property, we apply the Fintushel-Stern knot surgery in \cite{fs} to $W_n$ one more time.  
By performing a second knot surgery in one of the above nuclei, say $N_1$, with another knot $K'\subset S^3$, we obtain an irreducible $4$-manifold $W_{n,K'}$ that is homeomorphic to $W_n$.  
By varying our choice of the second knot $K'$, we can realize infinitely many pairwise nondiffeomorphic $4$-manifolds, either symplectic or nonsymplectic. 
\end{proof}

\section{Asymptotic inequalities}
\label{section: asymptotic inequalities}

Let us define $\chi_h(M) = \frac{1}{4}(e(M)+\sigma(M))$, 
which equals the holomorphic Euler characteristic when $M$\/ is a complex surface.
We start by stating the following useful theorem from \cite{ap: spin}.  

\begin{theorem}\label{theorem: spin wedge}
 {\rm (cf.~Theorem~9 in \cite{ap: spin} and Remark~4.2 in \cite{apu})} 
Let\/ $Z$ be a closed spin symplectic\/ $4$-manifold that contains a symplectic torus\/ $T$ of self-intersection\/ $0$.  
Let\/ $\nu (T)$ be a tubular neighborhood of\/ $T$ and\/ $\partial(\nu (T))$ its boundary.  
Suppose that the homomorphism\/ $\pi_1(\partial(\nu (T))) \rightarrow \pi_1(Z\setminus\nu (T))$ induced by the inclusion is trivial.  
Then for any pair of integers\/ $(\chi, c)$ satisfying
\begin{equation}\label{eq: chi c inequalities}
0\leq c \leq 8\chi  \quad 
\text{and\/} \quad  
c-8\chi \equiv 0 \pmod{16}, 
\end{equation}
there exists a closed spin symplectic\/ $4$-manifold\/ $Y$ with\/ $\pi_1(Y)=\pi_1(Z)$, 
\begin{equation*}
\chi_h(Y)=\chi_h(Z)+\chi  \quad \text{and\/} \quad 
c_1^2(Y)=c_1^2(Z)+c .   
\end{equation*}
\end{theorem}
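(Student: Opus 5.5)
Theorem~\ref{theorem: spin wedge} is cited from \cite{ap: spin}; to prove it I would build $Y$ as a symplectic normal sum of $Z$ along $T$ with a spin symplectic building block $B$. The block should contain a symplectic torus $T'$ of self-intersection $0$ whose complement is simply connected. Then $Y=(Z\setminus\nu(T))\cup(B\setminus\nu(T'))$.

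\emph{Invariants.} Since $e(T)=0$ and the signature is additive, we get $e(Y)=e(Z)+e(B)$ and $\sigma(Y)=\sigma(Z)+\sigma(B)$. It follows that
\[
\chi_h(Y)=\chi_h(Z)+\chi_h(B),\qquad c_1^2(Y)=c_1^2(Z)+c_1^2(B).
\]
So it suffices to realize every pair $(\chi,c)$ satisfying (\ref{eq: chi c inequalities}) as $(\chi_h(B),c_1^2(B))$ for such a pair $(B,T')$.

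\emph{Choice of blocks.} For $c=0$ I take $B=E(2k)$ or a knot-surgered copy, using a torus fiber in a Gompf nucleus. This has $\chi_h=2k$ and $c_1^2=0$, matching the congruence $c\equiv 8\chi\pmod{16}$ exactly when $\chi$ is even. For $c>0$ I would use the spin symplectic $4$-manifolds lying on or below the line $c_1^2=8\chi_h$, built from products $\Sigma_g\times\Sigma_h$ with $g,h$ odd (spin, $c_1^2=8\chi_h$), and modify them by Luttinger surgeries to kill $\pi_1$ while keeping a self-intersection-$0$ torus with simply connected complement. To move down toward $c=0$ I would then fiber-sum these with copies of $E(2)$. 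An $E(2)$ summand adds $(2,0)$ to $(\chi_h,c_1^2)$. A general pair would be reached as a sum of these elementary blocks: the corner points $(\chi,8\chi)$ for $\chi$ even in one family, plus multiples of $(2,0)$. Intermediate points in the residue classes mod $16$ need genus choices that realize every admissible $c$. At $(0,0)$ I would take $Y=Z$ itself, since $(\chi,c)=(0,0)$ satisfies (\ref{eq: chi c inequalities}).

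\emph{Topological properties.} Spin follows from Gompf's criterion, Proposition~1.2 in \cite{gompf}, for normal sums of spin manifolds along tori with compatible spin structures on the boundary $T^3$. For $\pi_1(Y)=\pi_1(Z)$, I apply Seifert--van Kampen. The hypothesis says $\pi_1(\partial\nu(T))$ dies in $\pi_1(Z\setminus\nu(T))$, and $B\setminus\nu(T')$ is simply connected. Hence the amalgamated product collapses to $\pi_1(Z\setminus\nu(T))$, and this equals $\pi_1(Z)$ because the meridian is already trivial.

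\emph{Main obstacle.} The hard step is the existence of the building blocks: spin, symplectic, containing a square-zero torus with simply connected complement, with every invariant pair $(\chi,c)$ in the wedge satisfying the congruence. The delicate part is arranging Luttinger surgeries in $\Sigma_g\times\Sigma_h$ that kill $\pi_1$ and preserve the spin structure, which requires choosing surgery coefficients and framings compatible with the spin structure. I would first try to realize the extremal line $c=8\chi$ and then fill in the interior by summing with $E(2)$ blocks.
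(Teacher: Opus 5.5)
\textbf{Verdict.} Your overall architecture is the right one: a normal sum of $(Z,T)$ with a block $(B,T')$ along square-zero tori, additivity of $\chi_h$ and $c_1^2$, Gompf's spin criterion, and van Kampen. The paper itself only recalls that $Y$ is a normal sum of $(Z,T)$ with ``another suitable symplectic pair''. There is, however, a genuine gap in what you require of the block.

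\textbf{The gap.} Asking $B\setminus\nu(T')$ to be simply connected forces $B$ to be simply connected, because $\pi_1(B)$ is a quotient of $\pi_1(B\setminus\nu(T'))$. In your scheme the pair $(1,8)$ can only come from a single line block with $\chi_h(B)=1$, $c_1^2(B)=8$. So you would need a simply connected spin symplectic $B$ with $e=4$ and $\sigma=0$, i.e.\ homeomorphic to $S^2\times S^2$, containing a square-zero symplectic torus.

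The adjunction formula rules this out for $S^2\times S^2$ itself. A square-zero class there is $k$ times a factor class, and $0=T^2+K\cdot T=\mp 2k$ forces $k=0$. So $B$ would have to be a symplectic exotic $S^2\times S^2$, which is not known to exist. The other small points on the line $c=8\chi$ have the same problem: they call for simply connected spin symplectic fake $\#_{2\chi-1}(S^2\times S^2)$'s carrying a suitable torus. Objects of that kind are what results like this one are used to produce; they are not available as inputs. As stated, your plan therefore cannot prove the theorem for all admissible $(\chi,c)$.

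\textbf{The missing idea.} The hypothesis on $Z$ is there to kill the block's fundamental group, not merely to identify $\pi_1(Z\setminus\nu(T))$ with $\pi_1(Z)$. Since all of $\pi_1(\partial\nu(T))$ dies in $Z\setminus\nu(T)$, van Kampen gives
\[
\pi_1(Y)\cong \pi_1(Z\setminus\nu(T)) * \frac{\pi_1(B\setminus\nu(T'))}{\langle\langle \mathrm{im}\,\pi_1(\partial\nu(T'))\rangle\rangle}.
\]
So you only need $\pi_1(B\setminus\nu(T'))$ to be normally generated by the image of the boundary $3$-torus. This allows non-simply-connected blocks. One example is $\Sigma_2\times\Sigma_{\chi+1}$ after only a partial set of Luttinger surgeries, chosen so that the surviving generators are carried by a homologically essential Lagrangian torus. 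That torus is then perturbed to be symplectic and used as $T'$. Blocks of this type give $(\chi,8\chi)$ for every $\chi\geq 1$. Summing with $E(2k)$ along fibers in other nuclei then supplies the $(2k,0)$ steps.

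\textbf{Smaller points.}
\begin{itemize}
\item Restricting to $g,h$ odd is unnecessary, since every $\Sigma_g\times\Sigma_h$ is spin.
\item That restriction is also harmful: then $(g-1)(h-1)\equiv 0\pmod 4$, so together with $(2,0)$-steps you only reach $c\in 32\mathbb{Z}$. Your ``genus choices'' for the remaining residues are left unspecified.
\item You still owe an actual argument that the Luttinger-surgered block, or the final sum, is spin.
\end{itemize}
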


We apply Theorem~\ref{theorem: spin wedge} to the $4$-manifolds $W_n$ in Section~\ref{section: symplectic 4-manifolds} and  
prove our main technical theorem below.  

\begin{theorem}\label{theorem: bound}
Let $s\geq 0$ be an even integer.  If\/ $n$ is a positive integer satisfying 
\begin{equation}\label{eq: s inequality}
s\leq \frac{1}{8} \sigma(W_n),
\end{equation} 
then we have  
\begin{equation}\label{eq: Lambda inequality}
\Lambda_s \leq - 10s - 1 
+ \frac{1}{4}c_1^2(W_n).
\end{equation} 
\end{theorem}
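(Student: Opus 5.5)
We would prove Theorem~\ref{theorem: bound} by applying Theorem~\ref{theorem: spin wedge} with $Z=W_n$ and $T=T_1$ from Lemma~\ref{lemma: W_n}. The hypothesis on $\pi_1$ holds because $\pi_1(W_n\setminus\nu(T_1))=1$, which follows from the lemma: removing fewer tori still leaves the meridians nullhomotopic. The output manifold $Y$ is then simply connected and spin. We must still show that it has $\infty^2$-property and that its intersection form is $sE_8\oplus qH$ for every odd $q\geq -10s-1+\tfrac14 c_1^2(W_n)$.

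First we translate the invariants. For spin $M$ with $[I_M]=sE_8\oplus qH$ we have $\sigma=8s$ and $e=2+8s+2q$. This gives $\chi_h=\tfrac14(2+16s+2q)=\tfrac12(1+8s+q)$ and $c_1^2=2e+3\sigma=4+40s+4q$. Now fix a target pair $(s,q)$ with $q$ odd. We need $\chi=\chi_h(Y)-\chi_h(W_n)$ and $c=c_1^2(Y)-c_1^2(W_n)$ to satisfy (\ref{eq: chi c inequalities}). Since $c_1^2-8\chi_h=-\sigma$, the difference $c-8\chi$ equals $-(8s-\sigma(W_n))=\sigma(W_n)-8s$. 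This is divisible by $16$ because both signatures are multiples of $16$, and it is $\geq 0$, i.e.\ $c\geq 8\chi$, exactly when (\ref{eq: s inequality}) holds.

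Here the ordering in (\ref{eq: chi c inequalities}) becomes the main obstacle, since it demands $c\le 8\chi$ while we found $c\ge 8\chi$. So we would recheck the sign conventions in Theorem~9 of \cite{ap: spin}. The construction there adds copies of $E(2)$-type pieces, which lower the signature, so the natural constraint is that $\sigma(Y)\le\sigma(W_n)$, which is what (\ref{eq: s inequality}) provides. We will use the theorem in the form that decreasing the signature by a multiple of $16$ while keeping $c\geq 0$ is allowed. In that form the remaining condition is $c\geq 0$, that is, $4+40s+4q\geq c_1^2(W_n)$, or equivalently $q\geq \tfrac14 c_1^2(W_n)-10s-1$. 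Because $c_1^2(W_n)\equiv 0\pmod 4$ (as in (\ref{eq: c_1^2(W_n)})), this threshold is an integer. Every odd $q$ at or above it is therefore realized by a closed simply connected spin symplectic $Y$ with form $sE_8\oplus qH$, using Freedman's classification \cite{freedman} together with the evenness of the form.

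Finally, we upgrade $Y$ to $\infty^2$-property. The construction in Theorem~\ref{theorem: spin wedge} only modifies a neighbourhood of $T_1$, so $Y$ still contains the tori $T_2,\dots,T_r$ inside Gompf nuclei with nullhomotopic meridians (recall $r=4p\geq 2$). Irreducibility of $Y$ follows from Lemma~2 of \cite{ap: spin}, as in the proof of Lemma~\ref{lemma: W_n}. We then perform Fintushel--Stern knot surgery along $T_2$ with varying knots $K'$. Fibered knots give infinitely many symplectic manifolds, and non-fibered knots give infinitely many nonsymplectic ones, all pairwise distinguished by their Seiberg--Witten invariants and all homeomorphic to $Y$. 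This shows that $sE_8\oplus qH$ has $\infty^2$-property for all odd $q$ above the bound, which yields (\ref{eq: Lambda inequality}).
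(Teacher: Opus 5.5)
Your route is the paper's: apply Theorem~\ref{theorem: spin wedge} to $W_n$ and one nucleus torus, solve for $(\chi,c)$ in terms of $(s,q)$, get the bound on $q$ from $c\ge 0$, and knot-surger in a second nucleus for the $\infty^2$-property. The paper uses $T_2$ for the wedge and $N_1$ for the knot surgery; swapping the labels is immaterial. However, your check of the ordering condition in (\ref{eq: chi c inequalities}) contains a sign error, and your workaround is not a valid step. The correct identity is
\[
c_1^2-8\chi_h=(2e+3\sigma)-2(e+\sigma)=\sigma,
\]
not $-\sigma$. Hence
\[
c-8\chi=\sigma(Y)-\sigma(W_n)=8s-\sigma(W_n),
\]
which agrees with the formula $\sigma(Y)=\sigma(W_n)+c-8\chi$ in the paper's proof. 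So $c\le 8\chi$ holds \emph{exactly} when (\ref{eq: s inequality}) holds, and there is no conflict with Theorem~\ref{theorem: spin wedge} as stated. The ``main obstacle'' is an artifact of your slip.

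As written, your proposal declares the cited theorem inconsistent with what is needed and then invokes an unverified ``form'' of it. The form you guess (signature drops by a nonnegative multiple of $16$, with $c\ge 0$) happens to be literally equivalent to (\ref{eq: chi c inequalities}). Still, you should verify the hypotheses directly rather than rewrite the theorem:
\begin{itemize}
\item $c\ge 0$ if and only if $q\ge q_0:=\frac14c_1^2(W_n)-10s-1$;
\item $c\le 8\chi$ if and only if $8s\le\sigma(W_n)$;
\item $c-8\chi\equiv 0\pmod{16}$, since both signatures are divisible by $16$;
\item $\chi\in\mathbb{Z}$, since $q$ is odd.
\end{itemize}

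One further small point: to conclude $\Lambda_s\le q_0$ you need $q_0$ to be odd, not merely an integer. Otherwise you would only get $\Lambda_s\le q_0+1$. Oddness holds because $q_0=b_2^-(W_n)+10\bigl(\tfrac18\sigma(W_n)-s\bigr)$, where $b_2^-(W_n)$ is odd and $\tfrac18\sigma(W_n)-s$ is even.

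With these corrections, the rest of your argument matches the paper's proof. That covers triviality of $\pi_1(W_n\setminus\nu(T_1))$, survival of the other nuclei in $Y$, and knot surgery with fibered and nonfibered knots.
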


\begin{proof}
We apply Theorem~\ref{theorem: spin wedge} to the symplectic pair $(Z,T)=(W_n,T_2)$, 
where $T_2$ is a torus fiber of Gompf nucleus $N_2$ in the 
$(E(r)_K\setminus\nu(\Sigma_K))$ half of $W_n$ 
as in the proof of Lemma~\ref{lemma: W_n}.  
For each $(\chi, c)$ satisfying (\ref{eq: chi c inequalities}), we obtain a closed simply connected spin symplectic 4-manifold $Y$\/ with
\begin{equation*}
e(Y) = e(W_n) + 12\chi - c 
\quad \text{and} \quad
\sigma(Y) = \sigma(W_n) + c - 8\chi .
\end{equation*}
The intersection form of $Y$\/ is represented by $s E_8 \oplus q H$, where 
\begin{align*}
s &= \frac{1}{8}\sigma(Y) = \frac{1}{8}(\sigma(W_n)+c-8\chi) 
\equiv 0 \pmod{2} ,\\[2pt] 
q &= b_2^-(Y) = \frac{1}{2}(e(W_n)-\sigma(W_n)) + 10\chi -c -1
\equiv 1 \pmod{2} .
\end{align*}
Since $c-8\chi\leq 0$ in (\ref{eq: chi c inequalities}), 
we must have $s\leq \frac{1}{8}\sigma(W_n)$, 
which is exactly (\ref{eq: s inequality}).  
Solving for $(\chi, c)$ in terms of $s$, $q$\/ and $n$, we get
\begin{align*}
\chi & = \frac{1}{2}(8s+q+1) -\chi_h(W_n) ,\\[2pt]
c & = 40s + 4q + 4 - c_1^2(W_n).
\end{align*}
Since $c\geq 0$ in (\ref{eq: chi c inequalities}), we must have 
$q\geq -10s -1 + \frac{1}{4}c_1^2(W_n)$.   

Now recall from \cite{ap: spin} that $Y$\/ is the symplectic normal sum of $(W_n,T_2)$ and 
another suitable symplectic pair.  
Since we can perform a knot surgery in the Gompf nucleus
\begin{equation*}
N_1 \,\subset\, \big(E(r)_{K} \setminus (\nu(\Sigma_{K})\cup N_2) \big) 
\,\subset\, (W_n\setminus \nu(T_2)) \,\subset\, Y, 
\end{equation*}
$Y$\/ has $\infty^2$-property just like $W_n$. 
Therefore, for every even integer $s$\/ satisfying 
$0\leq s\leq \frac{1}{8}\sigma(W_n)$, we have 
$\Lambda_s \leq -10s -1 + \frac{1}{4}c_1^2(W_n)$, 
which is exactly (\ref{eq: Lambda inequality}).
\end{proof}

Next recall from Equations (\ref{eq: sigma(W_n)}) and (\ref{eq: c_1^2(W_n)}) in Section~\ref{section: symplectic 4-manifolds} that 
\begin{align*}
\sigma(W_{12p})&=9216p^5 -9216p^3 -4608p^2 +32p +64,\\
c_1^2(W_{12p})&=82944p^5 + 36864p^3 - 16128p^2 - 960p + 304,    
\end{align*}
where $n=12p$\/ and $p\geq 5$ is a prime number.  
Note that both invariants are degree~5 polynomials in $p$.  
We recall that the $4$-manifold building blocks in \cite{apu} all have topological invariants that are degree~7 polynomials (see p.~61 of \cite{apu}), and thus their invariants are comparatively larger than the corresponding invariants of $W_{12p}$.  
Hence applying Theorem~\ref{theorem: bound} to $\{W_{12p}\mid \mathrm{prime\ } p\geq 5\}$ will further yield \emph{infinitely many}\/ new simply connected spin $4$-manifolds with $\infty^2$-property.  

We now focus our attention to refining (\ref{eq: Lambda inequality}).  
Let us define 
\begin{equation*}
\xi(p)=\frac{1}{8}\sigma(W_{12p})
=1152 p^5 -1152 p^3 -576 p^2 +4p +8.
\end{equation*} 
We can easily check that 
$$\xi'(x)=4(1440x^4-864x^2-288x+1)>0$$ 
when $x\geq 1$, and thus $\xi(p)$ is an increasing function of prime numbers $p\geq 5$.   

Let $p_k$ denote the $k$-th prime number so that $p_1=2$, $p_2=3$, etc.  
For any positive even integer $s$\/ that is greater than 
$\xi(5)=3441628$, 
we define 
\begin{equation}\label{eq: d(s)}
d(s) = p_{k+1} - p_k,
\end{equation}
where $p_k$ and $p_{k+1}$ are the unique consecutive prime numbers that satisfy:
\begin{equation}\label{eq: s interval}
\xi(p_k)<s\leq \xi(p_{k+1}). 
\end{equation}
In other words, $p_{k+1}$ is the smallest prime number such that 
$\xi^{-1}(s)\leq p_{k+1}$.  
Note that $d(s)\geq 2$.  
We also note that $s\to \infty$ if and only if $p_{k+1}\to\infty$. 
We will make use of the following theorem from \cite{bhp},
which gives the best known upper bound on the consecutive prime gap.  

\begin{theorem}\label{theorem: prime gaps}
{\rm (cf.~Theorem~1 in \cite{bhp})}
Let $p_k$ denote the $k$-th prime number.  
If\/ $\theta=0.525$, then 
there is a positive integer $N$\/ such that $p_{k+1}-p_k \leq p_{k+1}^{\theta}$ for all $k\geq N$.  
\end{theorem}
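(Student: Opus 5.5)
This statement is the Baker–Harman–Pintz theorem on gaps between consecutive primes, quoted from \cite{bhp}. The paper does not prove it; it is an external input for the asymptotic bound on $\Lambda_s$. My plan is therefore not to reprove it but to indicate the architecture of the argument in \cite{bhp}, which I would follow.

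Write $x=p_{k+1}$. It suffices to show that for all large $x$ the interval $[x-x^{\theta},x]$ contains a prime. Then $p_k\ge x-x^\theta$, which gives $p_{k+1}-p_k\le p_{k+1}^{\theta}$.

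The approach is sieve-theoretic, in the form of Harman's alternative sieve. Let $\mathcal{A}=\{n:x-y<n\le x\}$ with $y=x^{\theta}$, and let $\mathcal{B}=\{n:x-Y<n\le x\}$ with $Y=x/\log^{A}x$ a comparison interval. The prime count in $\mathcal{A}$ is $S(\mathcal{A},x^{1/2})$. Buchstab's identity decomposes this into sums of the form $S(\mathcal{A}_{p_1\cdots p_j},p_j)$. The key steps, in order, are as follows:
\begin{enumerate}
\item Establish ``Type I'' and ``Type II'' asymptotic formulas. These compare $\sum_{mn\in\mathcal{A}} a_m b_n$ with $(y/Y)\sum_{mn\in\mathcal{B}} a_m b_n$ whenever the variables lie in admissible ranges. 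They come from mean-value estimates for Dirichlet polynomials over $|t|\le x^{1-\theta+\epsilon}$: the Halász–Montgomery method, Huxley-type large-value estimates, Heath-Brown's bounds for the zeta function, and Watt's fourth-moment theorem for $\zeta$ twisted by a Dirichlet polynomial.
\item Apply Buchstab's identity repeatedly. Every term whose variables fall into an admissible range is evaluated asymptotically. The terms that cannot be evaluated are discarded, but only with the correct sign, so that what remains is a lower bound.
\item Bound the total discarded proportion numerically, using Buchstab-function integrals. The goal is to show it is strictly less than $1$ when $\theta=0.525$. This gives $\pi(x)-\pi(x-y)\gg y/\log x>0$.
\end{enumerate}

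The main obstacle is step 3 together with the sharpest form of the Type II ranges from step 1. The exponent $0.525$ is exactly where the numerically computed loss drops just below $1$, and verifying this needs a delicate multidimensional integral computation. In a work like the present one, I would simply cite \cite[Theorem~1]{bhp} rather than reproduce this computation.
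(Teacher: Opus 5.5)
Your plan matches the paper's, which gives no argument of its own and simply quotes \cite[Theorem~1]{bhp}; the sieve sketch is background only. However, the one deduction you actually write out fails as stated. The cited theorem says that for every real $x>x_0$ the interval $[x-x^{0.525},x]$ contains a prime. If you take $x=p_{k+1}$, that prime may be $p_{k+1}$ itself. So nothing forces $p_k\ge x-x^{\theta}$: the condition ``$[x-x^{\theta},x]$ contains a prime'' holds automatically at $x=p_{k+1}$ and says nothing about $p_k$.

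One repair is to apply the theorem at real $x$ with $\max(x_0,p_k)<x<p_{k+1}$. This is possible once $p_{k+1}>x_0$, and that condition determines $N$. The prime $q\in[x-x^{\theta},x]$ then satisfies $q<p_{k+1}$, hence $q\le p_k$, so $p_k\ge x-x^{\theta}$. Letting $x\to p_{k+1}^{-}$ and using continuity of $x\mapsto x-x^{\theta}$ gives $p_{k+1}-p_k\le p_{k+1}^{\theta}$.

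Another repair uses the quantitative bound $\pi(x)-\pi(x-x^{\theta})\gg x^{\theta}/\log x$ from your step~3. For large $x$ it gives at least two primes in $[x-x^{\theta},x]$, hence one other than $p_{k+1}$.

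The naive shift $x=p_{k+1}-1$ does not quite work. It only yields $p_{k+1}-p_k\le 1+(p_{k+1}-1)^{\theta}$, and this exceeds $p_{k+1}^{\theta}$ because $\theta<1$.
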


It now follows from Theorem~\ref{theorem: prime gaps} that we have 
\begin{equation}\label{eq: d(s) bound}
2\leq d(s)\leq p_{k+1}^\theta < p_{k+1}
\end{equation}
for all large enough $s$.  

\begin{corollary}\label{corollary: Lambda}
Let\/ $\Lambda_s$ be as in Definition~$\ref{definition: Lambda}$,
and let $d(s)$ be as in $(\ref{eq: d(s)})$.   
We have 
\begin{equation}\label{eq: asymptotic bound}
\Lambda_s \leq 8s
+O\big(s^{4/5}\,d(s)\big).
\end{equation}
as $s\to\infty$.
\end{corollary}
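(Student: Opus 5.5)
Given a large even $s$, I will take the consecutive primes $p_k<p_{k+1}$ from (\ref{eq: s interval}) and apply Theorem~\ref{theorem: bound} with $n=12p_{k+1}$. The hypothesis (\ref{eq: s inequality}) holds because $s\le \xi(p_{k+1})=\frac18\sigma(W_{12p_{k+1}})$. This yields
\[
\Lambda_s\le -10s-1+\tfrac14 c_1^2(W_{12p_{k+1}}).
\]
The task then reduces to comparing $\frac14c_1^2(W_{12p})$ with $18s$, where $p=p_{k+1}$. From the explicit polynomials,
\[
\tfrac14c_1^2(W_{12p})=20736p^5+9216p^3+O(p^2)=18\,\xi(p)+O(p^3),
\]
since $18\cdot1152=20736$. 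Therefore
\[
\Lambda_s\le 8s+18\big(\xi(p_{k+1})-s\big)+O(p_{k+1}^3).
\]

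Next I bound $\xi(p_{k+1})-s$. Because $s>\xi(p_k)$, we have $\xi(p_{k+1})-s<\xi(p_{k+1})-\xi(p_k)$. By the mean value theorem this is at most $\xi'(p_{k+1})(p_{k+1}-p_k)$, using that $\xi'$ is increasing for $x\ge1$ (as $\xi''>0$ there). Since $\xi'(x)=O(x^4)$, this gives $O(p_{k+1}^4\,d(s))$. The error term $O(p_{k+1}^3)$ is absorbed into this, since $d(s)\ge2$.

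Finally I convert powers of $p_{k+1}$ into powers of $s$. I need $p_{k+1}=O(s^{1/5})$. We know $s>\xi(p_k)\ge c\,p_k^5$ for some constant $c>0$ and large $p_k$. By (\ref{eq: d(s) bound}), $p_{k+1}\le p_k+p_{k+1}^\theta$, which gives $p_{k+1}\le 2p_k$ for large $k$. Hence $p_{k+1}^4=O(s^{4/5})$, and combining the steps gives $\Lambda_s\le 8s+O(s^{4/5}d(s))$.

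The main obstacle is this last comparison between $p_{k+1}$ and $p_k$. The lower bound on $s$ is phrased through $p_k$, while the bounds on $\Lambda_s$ are phrased through $p_{k+1}$, so I need the ratio $p_{k+1}/p_k$ to be bounded. Theorem~\ref{theorem: prime gaps} provides this; Bertrand's postulate would also suffice. The polynomial bookkeeping in the other steps is routine.
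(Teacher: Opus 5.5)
Your proof is correct and follows the same route as the paper: you apply Theorem~\ref{theorem: bound} with $n=12p_{k+1}$, compare $\tfrac14 c_1^2(W_{12p_{k+1}})$ with $18s$ via $\xi(p_k)<s\le\xi(p_{k+1})$, and convert $p_{k+1}^4\,d(s)$ into $s^{4/5}d(s)$. The differences are only in bookkeeping. You bound $\xi(p_{k+1})-\xi(p_k)$ by the mean value theorem, where the paper binomially expands $\xi(p_{k+1}-d(s))$. You also obtain $p_{k+1}=O(s^{1/5})$ from $p_{k+1}\le 2p_k$, for which Bertrand's postulate indeed suffices; this is slightly cleaner than the paper's one-line justification.
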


\begin{proof}
Given any even integer $s> \xi(5)$, we have $p_k=p_{k+1}-d(s)$.  
It follows from (\ref{eq: s interval}) that $\xi(p_{k+1}-d(s))< s$.  
Since $d(s)\ll p_{k+1}$ for large values of $s$\/ by (\ref{eq: d(s) bound}), we have  
\begin{align}
s> \xi(p_{k+1}-d(s))&=1152 (p_{k+1}-d(s))^5 -1152 (p_{k+1}-d(s))^3 -\cdots
\label{eq: s lower bound}\\
&= 1152(p_{k+1}^5-5p_{k+1}^4 d(s)+ 10p_{k+1}^3d(s)^2-\cdots) \nonumber \\
&\hspace{4mm}-1152(p_{k+1}^3-3p_{k+1}^2 d(s) + \cdots) - \cdots \nonumber\\
&=1152p_{k+1}^5 + O(p_{k+1}^4 d(s)).\nonumber
\end{align}
Hence for large values of $s$, we have 
\begin{equation*}
 p_{k+1}^5<\xi(p_{k+1}-d(s))< s.  
\end{equation*}
Thus we can conclude that 
\begin{equation}\label{eq: p_k+1 big-O}
p_{k+1}=O(s^{1/5})
\end{equation}
as $s\to\infty$.

It also follows from (\ref{eq: s interval}) that  
\begin{equation}\label{eq: s upper bound}
s \leq \xi(p_{k+1}) < 1152 p_{k+1}^5-1152 p_{k+1}^3 -\cdots 
< 1152 p_{k+1}^5.
\end{equation}
Combining the lower bound (\ref{eq: s lower bound}) and the upper bound (\ref{eq: s upper bound}) for $s$, we deduce that  
\begin{equation}\label{eq: s big-O}
s=1152 p_{k+1}^5+O(p_{k+1}^{4}d(s))
\end{equation}
as\/ $p_{k+1}\to\infty$.  

It now follows from (\ref{eq: c_1^2(W_n)}), (\ref{eq: p_k+1 big-O}) and (\ref{eq: s big-O}) that 
\begin{align*}
c_1^2(W_{12p_{k+1}})&= 82944 p_{k+1}^{5}+O(p_{k+1}^{3})
=\frac{82944}{1152}\cdot s +O(p_{k+1}^4 d(s))+O(p_{k+1}^{3})\\[2pt]
&= 72s+O(p_{k+1}^4 d(s)) = 72s  +O\big(s^{4/5} \, d(s)\big)
\end{align*}
as $p_{k+1}\to\infty$ and $s\to\infty$.
Substituting this last expression into (\ref{eq: Lambda inequality}) 
with $n=12p_{k+1}$, we obtain (\ref{eq: asymptotic bound}).  
\end{proof}

To refine (\ref{eq: asymptotic bound}), we need an upper bound on $d(s)$.  
To that end, we will assume the following well-known open conjecture in number theory.  
It was originally formulated by Cram\'er (cf.~\cite{cramer} and \cite{granville}) 
and has been verified for prime numbers up to $10^{18}$ (see p.~1473 of \cite{bft}).  
\begin{conjecture}[Cram\'er]\label{conjecture: Cramer}
If $p_k$ denotes the $k$-th prime number,   
then $$p_{k+1}-p_k = O((\log p_k)^2).$$  
\end{conjecture}

\begin{proposition}\label{proposition: new upper bound}
If Cram\'er's conjecture is true, then we have 
\begin{equation}\label{eq: log asymptotic bound}
\Lambda_s \leq 8s + O\big(s^{4/5} (\log s)^2 \big) 
\end{equation}
as $s\to\infty$.
\end{proposition}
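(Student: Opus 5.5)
The plan is to combine Corollary~\ref{corollary: Lambda} with Conjecture~\ref{conjecture: Cramer}, which requires showing that $d(s)=O((\log s)^2)$ as $s\to\infty$. Once this is in hand, substituting into (\ref{eq: asymptotic bound}) gives $\Lambda_s\leq 8s+O\big(s^{4/5}(\log s)^2\big)$ immediately, since the implied constants simply multiply.

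For large even $s>\xi(5)$, let $p_k<p_{k+1}$ be the consecutive primes determined by (\ref{eq: s interval}), so that $d(s)=p_{k+1}-p_k$. As noted after (\ref{eq: d(s)}), $s\to\infty$ forces $p_{k+1}\to\infty$, and hence $k\to\infty$. Cram\'er's conjecture then gives a constant $C$ with $d(s)\leq C(\log p_k)^2$ for all sufficiently large $s$.

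It remains to bound $\log p_k$ by a constant multiple of $\log s$. From (\ref{eq: s interval}), $\xi(p_k)<s$. Since $\xi(x)=1152x^5-1152x^3-576x^2+4x+8\geq x^5$ for $x\geq 5$ (easily checked, or seen from $\xi$ being increasing with leading term $1152x^5$), we get $p_k^5<s$. Alternatively, (\ref{eq: p_k+1 big-O}) gives $p_{k+1}=O(s^{1/5})$, and $p_k<p_{k+1}$. Either way, $\log p_k\leq \tfrac15\log s+O(1)\leq \log s$ for large $s$. Therefore $d(s)\leq C(\log s)^2$, so $s^{4/5}d(s)=O\big(s^{4/5}(\log s)^2\big)$, and (\ref{eq: log asymptotic bound}) follows from (\ref{eq: asymptotic bound}).

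There is no serious obstacle. The only point needing care is that Cram\'er's bound is stated in terms of $p_k$, not $p_{k+1}$, and is asymptotic in $k$. Both issues are handled by the monotonicity of $\xi$ and the fact that $k\to\infty$ as $s\to\infty$, so the $O$-constant from the conjecture applies uniformly for all large $s$.
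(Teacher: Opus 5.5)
Your proposal is correct and follows essentially the same route as the paper. Both use Cram\'er's conjecture to get $d(s)=O\big((\log s)^2\big)$ and substitute this into Corollary~\ref{corollary: Lambda}. The paper bounds $\log p_k<\log p_{k+1}$ and then uses $p_{k+1}=O(s^{1/5})$, whereas you bound $\log p_k$ directly via $p_k^5<\xi(p_k)<s$; this is a harmless variant of the same step.
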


\begin{proof}
Since $\log p_k <\log p_{k+1}$, we have $d(s)=p_{k+1}-p_k=O((\log p_{k+1})^2)$.
From (\ref{eq: p_k+1 big-O}), we deduce that 
$d(s)=O((\log (s^{1/5}))^2)=O((\log s)^2)$.
Substituting this into (\ref{eq: asymptotic bound}), we obtain (\ref{eq: log asymptotic bound}).  
\end{proof}

\begin{remark}\label{remark: alternative bound}
Since $s^{4/5} (\log s)^2<s^{6/7}$, the asymptotic upper bound (\ref{eq: log asymptotic bound}) is strictly stronger than the upper bound (\ref{eq: apu bound}) from \cite{apu}. 
In general, 
for the asymptotic upper bound (\ref{eq: asymptotic bound}) to be strictly stronger than (\ref{eq: apu bound}), we would need 
$s^{4/5}\,d(s)<s^{6/7}$, which is equivalent to 
$$d(s)<s^{6/7-4/5}=s^{2/35}=O(p_{k+1}^{2/7})$$
by (\ref{eq: s upper bound}).  
Thus we could certainly improve the upper bound (\ref{eq: apu bound}) if we had  
\begin{equation}\label{eq: weaker gap conjecture}
p_{k+1}-p_k = O(p_{k+1}^{\epsilon}),  
\end{equation}
for some constant $\epsilon < 2/7$.  
As far as the authors know, it is unknown whether (\ref{eq: weaker gap conjecture}) holds for some constant $\epsilon < 2/7$.  
However, we note that (\ref{eq: weaker gap conjecture}) is strictly weaker than Cram\'er's conjecture, and we observe that $\theta=0.525>2/7$ in Theorem~\ref{theorem: prime gaps}.  
\end{remark}

\section{Relation to Bogomolov-Miyaoka-Yau inequality}
\label{section: BMY inequality}

The previous section was preoccupied with providing new upper bounds on $\Lambda_s$. 
In this section, we will discuss a conjectural lower bound on $\Lambda_s$.  
The famous Bogomolov-Miyaoka-Yau inequality (cf.~\cite{bogomolov}, \cite{miyaoka}, \cite{yau}) states that 
if $M$\/ is a complex surface of general type, then we must have 
$c_1^2(M)\leq 9\chi_h(M)$, or equivalently we must have 
\begin{equation}\label{eq: bmy}
c_1^2(M)\leq 3e(M).
\end{equation}
Fintushel and Stern have conjectured that (\ref{eq: bmy}) holds for all simply connected symplectic $4$-manifolds as well 
(cf.~Conjecture~2.1 in \cite{aim} and Problem~4.90(a) in \cite{bkr}).  
At this moment, this ``symplectic BMY'' conjecture is open, i.e., we do not know any simply connected symplectic $4$-manifold $M$\/ that violates (\ref{eq: bmy}).  

If we drop the simply connected condition, then there are counterexamples.  
If $\Sigma_g$ denotes a genus $g$ curve with $g>1$, then the product ruled surface $\mathbb{CP}^1\times \Sigma_g$ is symplectic, but satisfies 
$c_1^2(\mathbb{CP}^1\times \Sigma_g)=-8(g-1)$ and $e(\mathbb{CP}^1\times \Sigma_g)=-4(g-1)$,
thereby violating (\ref{eq: bmy}).  
(See \cite{capovilla} for some more counterexamples.)  
By~(\ref{eq: limit of c_1^2/c_2}) and (\ref{eq: below BMY line}) in Section~\ref{section: symplectic 4-manifolds}, 
we know that the family of symplectic $4$-manifolds 
$$\{W_n\mid n=12p \mathrm{\ for\ primes\ }p\geq 5\}$$ approaches the BMY line $c_1^2=3e$ from below.  

If the above symplectic BMY conjecture is true, then we may deduce the following lower bound for $\Lambda_s$.  

\begin{proposition}\label{proposition: lower bound}
If the Bogomolov-Miyaoka-Yau inequality\/ $(\ref{eq: bmy})$ holds for every simply connected spin symplectic $4$-manifold $M$, then\/ 
$\Lambda_s \geq 8s-1$\/ for every even integer $s\geq 0$.
\end{proposition}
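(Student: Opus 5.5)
The plan is to argue by contrapositive through the definition of $\Lambda_s$. Fix an even integer $s\geq 0$ and suppose $\Lambda_s \leq 8s-3$. We need to show that this is incompatible with the symplectic BMY inequality (\ref{eq: bmy}).

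The natural form to test is $sE_8\oplus qH$ with $q=8s-3$. This $q$ is odd, and when $s=0$ it is negative; that case is handled separately below. Definition~\ref{definition: Lambda} requires $\infty^2$-property for every odd $q\geq\Lambda_s$, so $q=8s-3$ would have $\infty^2$-property. In particular there would be an irreducible symplectic $4$-manifold $M$ that is homeomorphic to a simply connected spin manifold with intersection form $sE_8\oplus(8s-3)H$. Being homeomorphic, $M$ is itself simply connected and spin, so the BMY hypothesis applies to it.

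Next compute the invariants of $M$ from (\ref{eq: e and sigma}). With $q=8s-3$ we get
\[ e(M)=2+8s+2(8s-3)=24s-4, \qquad \sigma(M)=8s. \]
Hence
\[ c_1^2(M)=2e(M)+3\sigma(M)=72s-8 \]
and
\[ 3e(M)=72s-12. \]
So $c_1^2(M)-3e(M)=4>0$, which violates (\ref{eq: bmy}). Therefore $\Lambda_s>8s-3$. Since $\Lambda_s$ is odd, this gives $\Lambda_s\geq 8s-1$.

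The same computation for general odd $q$ gives
\[ c_1^2-3e = 2e+3\sigma-3e = 3\sigma - e = 24s-2-8s-2q = 16s-2-2q, \]
which is positive exactly when $q\leq 8s-3$. This confirms that $q=8s-3$ is the threshold.

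The main obstacle is bookkeeping at the boundary cases. When $s=0$ the claimed bound $\Lambda_0\geq -1$ is trivially true, because $\Lambda_s$ is a positive odd integer by definition. So we may assume $s\geq 2$, in which case $8s-3\geq 13$ and the form with $q=8s-3$ genuinely exists with $q\geq 0$. One also needs $\Lambda_s$ to exist at all, i.e.\ to be finite; this is guaranteed by \cite{jpark: spin}, as recalled before Definition~\ref{definition: Lambda}. Finally, the argument only uses the symplectic half of the $\infty^2$-property, and only a single symplectic manifold from it.
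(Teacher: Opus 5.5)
Your proof is correct and is essentially the paper's argument: both take a symplectic manifold supplied by the definition of $\Lambda_s$ and apply $c_1^2=2e+3\sigma\leq 3e$, i.e.\ $e\geq 3\sigma$. The only difference is that the paper applies this directly at $q=\Lambda_s$, getting $2+8s+2\Lambda_s\geq 24s$ with no parity step, whereas you argue by contradiction at $q=8s-3$ and then use that $\Lambda_s$ is odd.
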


\begin{proof}
Given a nonnegative even integer $s$, let $M_{s}$ be a simply connected symplectic $4$-manifold such that $\sigma(M_{s})=8s$ and $e(M_{s})=2+8s+2\Lambda_s$.  
Such $M_{s}$ exists by (\ref{eq: e and sigma}) and Definition~\ref{definition: Lambda}.
It follows from (\ref{eq: bmy}) that 
\[
3e(M_s)\geq {c_1^2(M_{s})}=2e(M_{s})+3\sigma(M_{s}).
\]
Thus we conclude that 
\[
2+8s+2\Lambda_s=e(M_s)\geq 3\sigma(M_s)=24 s,
\]
which in turn implies that $\Lambda_s\geq 8s -1$.
\end{proof}

By combining the lower bound in the previous proposition with inequality (\ref{eq: apu bound}) from \cite{apu}, we obtain the following.  
	
\begin{corollary}\label{corollary: bmy}
If the Bogomolov-Miyaoka-Yau inequality $(\ref{eq: bmy})$ holds for every simply connected spin symplectic $4$-manifold $M$, then 
\[
\lim_{s\to\infty}\frac{\Lambda_s}{s} = 8. 
\]
\end{corollary}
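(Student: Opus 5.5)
The plan is a squeeze argument: divide by $s$ and bound $\Lambda_s/s$ from below and from above, with both bounds tending to $8$. The lower bound is immediate under the hypothesis. Proposition~\ref{proposition: lower bound} gives $\Lambda_s \geq 8s-1$ for every even $s\geq 0$, so for $s>0$ we get
\[
\frac{\Lambda_s}{s}\geq 8-\frac{1}{s},
\]
and hence $\liminf_{s\to\infty}\Lambda_s/s\geq 8$.

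For the upper bound I will use the unconditional asymptotic estimate (\ref{eq: apu bound}) from \cite{apu}, namely $\Lambda_s\leq 8s+O(s^{6/7})$. Concretely, there are constants $C$ and $s_0$ with $\Lambda_s\leq 8s+Cs^{6/7}$ for all even $s\geq s_0$. Dividing by $s$ gives
\[
\frac{\Lambda_s}{s}\leq 8+Cs^{-1/7}\longrightarrow 8,
\]
so $\limsup_{s\to\infty}\Lambda_s/s\leq 8$. Alternatively, Corollary~\ref{corollary: Lambda} combined with Theorem~\ref{theorem: prime gaps} gives $d(s)<p_{k+1}=O(s^{1/5})$. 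This yields $\Lambda_s\leq 8s+O(s^{4/5}\cdot s^{0.525/5})$, an exponent strictly below $1$, which suffices just as well.

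Combining the two bounds, $\lim_{s\to\infty}\Lambda_s/s=8$, where the limit is taken along even $s$ (the only values for which $\Lambda_s$ is defined).

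I expect no step to be a real obstacle. The only points needing care are that Definition~\ref{definition: Lambda} makes $\Lambda_s$ a well-defined finite number (guaranteed by \cite{jpark: spin}), and that the manifold $M_s$ used in Proposition~\ref{proposition: lower bound} is indeed simply connected, spin and symplectic, so that the BMY hypothesis applies to it.
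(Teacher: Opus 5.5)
Your proposal is correct and matches the paper's argument: the paper obtains the corollary by combining the conditional lower bound $\Lambda_s\geq 8s-1$ of Proposition~\ref{proposition: lower bound} with the unconditional upper bound $\Lambda_s\leq 8s+O(s^{6/7})$ from \cite{apu}, which is exactly your squeeze. One small slip in your alternative route: the exponent falls below $1$ only because of the bound $d(s)\leq p_{k+1}^{\theta}$ from Theorem~\ref{theorem: prime gaps}, whereas $d(s)<p_{k+1}=O(s^{1/5})$ alone would give only an $O(s)$ error term.
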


In light of Corollary~\ref{corollary: bmy}, we are tempted to make the following optimistic conjecture.  

\begin{conjecture}\label{conjecture: new}
For every even integer $s\geq 0$, we have $\Lambda_s=|8s-1|$.  
\end{conjecture}

Conjecture~\ref{conjecture: new} is open for all values of $s$ at the moment.  
For example, when $s=2$, Conjecture~\ref{conjecture: new} postulates that $\Lambda_2=15$, i.e., 
the form $2E_8\oplus qH$\/ has $\infty^2$-property for every odd integer $q\geq 15$.  
By Theorem~4 in \cite{ap: spin2}, we have an upper bound $\Lambda_2\leq 155$, i.e., we know that 
$2E_8\oplus qH$\/ has $\infty^2$-property for every odd integer $q\geq 155$.  
Going back to the example $\overline{E(2)}$ in the introduction, 
if $I_{\,\overline{E(2)}}=2E_8\oplus 3H$ had $\infty^2$-property, then we would have infinitely many pairwise nondiffeomorphic counterexamples to the symplectic BMY conjecture since $c_1^2(\overline{E(2)})/e(\overline{E(2)})=96/24=4>3$.


\begin{thebibliography}{99}

\bibitem{aim}
AIM Problem Lists: \textit{Symplectic four-manifolds through branched coverings}, available at http://aimpl.org/symplecticfour/2/.

\bibitem{ap: odd} 
A. Akhmedov and B. D. Park, 
\textit{Exotic smooth structures on small\/ $4$-manifolds with odd signatures}, 
Invent. Math. \textbf{181} (2010), 577--603.  

\bibitem{ap: spin}  
A. Akhmedov and B. D. Park, 
\textit{Geography of simply connected  spin symplectic\/ $4$-manifolds},
Math. Res. Lett. \textbf{17} (2010), 483--492.

\bibitem{ap: spin2}  
A. Akhmedov and B. D. Park, 
\textit{Geography of simply connected  spin symplectic\/ $4$-manifolds, II},
C. R. Math. Acad. Sci. Paris \textbf{357} (2019), 296--298.

\bibitem{apu} 
A. Akhmedov, B. D. Park and G. Urz\'{u}a,
\textit{Spin symplectic\/ $4$-manifolds near Bogomolov-Miyaoka-Yau line},
J. G\"{o}kova Geom. Topol. GGT \textbf{4} (2010), 55--66.

\bibitem{bhp}
R. C. Baker, G. Harman and J. Pintz, 
\textit{The difference between consecutive primes}, \textit{II},
Proc. London Math. Soc. \textbf{83} (2001), 532--562.

\bibitem{bft}
W. Banks, K. Ford, and T. Tao, 
\textit{Large prime gaps and probabilistic models},
Invent. Math. \textbf{233} (2023), 1471--1518.

\bibitem{bkr}
R. \.I. Baykur, R. C. Kirby and D. Ruberman,
\textit{K}\/3:~\textit{A New Problem List in Low-Dimensional Topology},
Math. Surveys Monogr., Vol. 295, 
Amer. Math. Soc., Providence, 2026.

\bibitem{bogomolov}  
F. A. Bogomolov, 
\textit{Holomorphic tensors and vector bundles on projective manifolds}, 
Math. USSR-Izv. \textbf{13} (1979), 499--555.	

\bibitem{capovilla}
P. Capovilla,
\textit{Aspherical\/ $4$-manifolds with positive Euler characteristic and their geography}, 
arXiv:2511.15577, 2025.

\bibitem{cramer}
H. Cram\'er,
\textit{On the order of magnitude of the difference between consecutive prime numbers},
Acta Arith. \textbf{2} (1936), 23--46.

\bibitem{ev}
H. Esnault and E. Viehweg, 
\textit{Lectures on vanishing theorems},
DMV Sem., Vol.~20, 
Birkh\"auser Verlag, Basel, 1992. 

\bibitem{fs}  
R. Fintushel and R. J. Stern, 
\textit{Knots, links and\/ $4$-manifolds}, 
Invent. Math. \textbf{134} (1998), 363--400.

\bibitem{fm}  
A. T. Fomenko and S. V. Matveev, 
\textit{Algorithmic and Computer Methods for Three-Manifolds}, 
Mathematics and its Applications, Vol. 425, 
Kluwer Academic Publishers, Dordrecht, 1997. 

\bibitem{freedman}  
M. H. Freedman,  
\textit{The topology of four-dimensional manifolds}, 
J. Differential Geom. \textbf{17} (1982), 357--453.  

\bibitem{gompf}  
R. E. Gompf, 
\textit{A new construction of symplectic manifolds},
Ann. of Math. \textbf{142} (1995), 527--595.

\bibitem{gompf-mrowka}  
R. E. Gompf and T. S. Mrowka, 
\textit{Irreducible\/ $4$-manifolds need not be complex}, 
Ann. of Math. \textbf{138} (1993), 61--111. 

\bibitem{gompf-stipsicz}  
R. E. Gompf and A. I. Stipsicz, 
\textit{$4$-Manifolds and Kirby Calculus}, 
Grad. Stud. Math., Vol.~20, Amer. Math. Soc., Providence, 1999.

\bibitem{granville}
A. Granville,
\textit{Harald Cram\'er and the distribution of prime numbers},
Harald Cram\'er Symposium,
Scand. Actuar. J. \textbf{1} (1995), 12--28. 

\bibitem{hirzebruch}
F. Hirzebruch, 
\textit{Topological Methods in Algebraic Geometry},
Die Grundlehren der mathematischen Wissenschaften, Vol.~131,
Springer-Verlag, New York, 1966.

\bibitem{mccarthy-wolfson}  
J. D. McCarthy and J. G. Wolfson,
\textit{Symplectic normal connect sum}, 
Topology \textbf{33} (1994), 729--764. 

\bibitem{miyaoka}  
Y. Miyaoka, 
\textit{On the Chern numbers of surfaces of general type}, 
Invent. Math. \textbf{42} (1977), 225--237.

\bibitem{ps}  
B. D. Park and Z. Szab\'o,  
\textit{The geography problem for irreducible spin four-manifolds}, 
Trans. Amer. Math. Soc. \textbf{352} (2000), 3639--3650.

\bibitem{jpark: spin}  
J. Park, 
\textit{The geography of spin symplectic\/ $4$-manifolds}, 
Math. Z. \textbf{240} (2002), 405--421. 

\bibitem{rohlin}  
V. A.  Rohlin, 
\textit{New results in the theory of four-dimensional manifolds}, 
Doklady Akad. Nauk SSSR \textbf{84} (1952), 221--224. 

\bibitem{ru}
X. Roulleau and G. Urz\'ua, 
\textit{Chern slopes of simply connected complex surfaces of general type are dense in\/ $[2,3]$},
Ann. of Math. \textbf{182} (2015), 287--306.

\bibitem{yau}  
S.-T. Yau, 
\textit{Calabi's conjecture and some new results in algebraic geometry},
Proc. Nat. Acad. Sci. U.S.A. \textbf{74} (1977), 1798--1799.  

\end{thebibliography}
\end{document}